\documentclass[11.5pt]{amsart}
\usepackage{amsmath,amssymb,amsbsy,amsfonts,latexsym,amsopn,amstext,
                                               amsxtra,euscript,amscd,bm}
\usepackage[margin=1in]{geometry}                    
\usepackage{url}
\usepackage[colorlinks,linkcolor=blue,anchorcolor=blue,citecolor=blue]{hyperref}
\usepackage{color}
\usepackage{enumerate}
\usepackage{hhline}
\usepackage{graphicx, float}
\usepackage{subcaption}
\begin{document}

\newtheorem{theorem}{Theorem}[section]
\newtheorem{lemma}[theorem]{Lemma}
\newtheorem{conjecture}[theorem]{Conjecture}
\newtheorem{proposition}[theorem]{Proposition}
\newtheorem{corollary}[theorem]{Corollary}
\newtheorem{claim}[theorem]{Claim}
\newtheorem{example}[theorem]{Example}
\theoremstyle{definition}
\newtheorem{remark}[theorem]{Remark}
\newtheorem{definition}[theorem]{Definition}

\def\E{{\mathbb E}}
\def\F{{\mathbb F}}
\def\P{{\mathbb P}}
\def\R{{\mathbb R}}
\def\Z{{\mathbb Z}}
\def\C{{\mathbb C}}
\def\N{{\mathbb N}}
\def\O{{\mathbb O}}
\def\B{{\mathcal B}}
\def\G{{\mathcal G}}
\def\Hy{{\mathcal H}}
\def\M{{\mathcal M}}
\def\S{{\mathcal S}}
\def\T{{\mathcal T}}
\def\U{{\mathcal U}}

\newcommand{\rst}[1]{\ensuremath{{\mathbin\upharpoonright}\raise-.5ex\hbox{$#1$}}}


\title[First server's effect on the expected number of games in Tennis]{First server's effect on the expected number of games in Tennis}

 \author[A. Mohammadi] {Ali Mohammadi}

\email{ali.mohammadi.np@gmail.com}

\pagenumbering{arabic}

\begin{abstract}
We show that information on the first server influences the expected total number of games and margin in a tennis match under the standard assumption that each player’s serve point win probability remains constant, and identify the exact regions, in terms of these probabilities, in which this effect is non-negligible. We confirm numerically that this effect is bounded by at most $0.4$ games at both the set and match level. This translates, for example, to roughly a $9$ percent shift in the probability that a match exceeds $19.5$ games when the players' serve point win probabilities differ by $10$ percent.

We complement the analysis with an empirical comparison on professional match data, illustrating the adequacy of the constant-probability assumption for modelling the
total number of games.
\end{abstract}

\maketitle

\section{Background}
Write $\mathcal{P_\sigma}, \G_{\sigma}, \S_{\sigma}$ and $\M_{\sigma}$ for the probabilities of player $\sigma\in\{A, B\}$ winning a point, game, set and match against the other player in a tennis match, respectively. Under the assumption that $\mathcal{P}_{A}$ and $\mathcal{P}_{B}$ are constant throughout the match, different authors \cite{Bar,OMa,NewKel} have obtained expressions for the aforementioned probabilities, in terms of $\mathcal{P}_{A}$ and $\mathcal{P}_{B}$. In particular, it is shown in \cite{NewKel} that the probabilities of winning a set and match are independent of the order of serve. In this short note, we show this is not the case for the expected total number of games and margin in a best-of-three sets match and provide a numerical analysis, highlighting both the adequacy of the constant-probability assumption and the regions in which the order of serve has a non-negligible effect.

As shown in \cite[equation 5]{NewKel}, writing
\begin{equation}\label{eqn:gameprob}
G(x) = x^4 \cdot \left( 15 - 4x- \frac{10x^2}{1-2x(1-x)}\right),
\end{equation}
we have $\G_{A} = G(\mathcal{P}_{A})$.

Let $\S^{\sigma}_{nm}$, $\sigma \in \{A, B\}$, denote the probability that a set ends with score $n-m$, where $n$ is the number of games won by the first server $\sigma$. See Appendix for the corresponding equations. By \cite[equations 18-20 and 22]{NewKel}
\begin{equation}\label{NK18}
\S_{60}^{A} = \S_{06}^{B}, \quad \S_{75}^{A} = \S_{57}^{B}, \quad
\S_{76}^{A} = \S_{67}^{B}.
\end{equation}
By \cite[equations 23-24]{NewKel}, we also have
\begin{equation}\label{NK23}
\S_{61}^{A}+\S_{62}^{A}=\S_{16}^{B}+\S_{26}^{B},\quad \S_{63}^{A}+\S_{64}^{A}=\S_{36}^{B}+S_{46}^{B}.
\end{equation}

We formulate our main results on the expected number of games and margins in terms of $\G_{A}$ and $\G_{B}$. The following preliminary results show they can be reformulated in terms of $\mathcal{P}_A$ and $\mathcal{P}_B$. Furthermore, all results require $\mathcal{G}_A+\mathcal{G}_B>1$ which is in practice almost always satisfied. We assume, without loss of generality, that $\G_{A}\geq\G_{B}$.
\begin{lemma}\label{lem:G-monotone}
The function $G(x)$, defined by \eqref{eqn:gameprob}, is strictly increasing for $0<x<1$. In particular,
\[
\G_{A}>\G_{B}
\quad\Longleftrightarrow\quad
\mathcal{P}_{A}>\mathcal{P}_{B}.
\]
\end{lemma}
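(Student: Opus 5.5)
Our plan is to compute $G'(x)$ directly and show it is positive on $(0,1)$; the equivalence $\G_{A}>\G_{B}\Leftrightarrow\mathcal{P}_{A}>\mathcal{P}_{B}$ then follows because a strictly increasing function is injective and order-preserving (using $\G_{\sigma}=G(\mathcal{P}_{\sigma})$ from \eqref{eqn:gameprob}).

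First we would put $G$ over a common denominator. Write $D(x)=1-2x(1-x)=1-2x+2x^2$, which is positive on $(0,1)$ since $D(x)=\tfrac12+2(x-\tfrac12)^2\ge \tfrac12$. Then
\[
G(x)=\frac{x^4\bigl((15-4x)(1-2x+2x^2)-10x^2\bigr)}{D(x)}=\frac{x^4\,(15-34x+28x^2-8x^3)}{1-2x+2x^2}.
\]
Rather than differentiating this quotient blindly, we would use the probabilistic structure. $G(x)$ is the probability of winning a game from $0$--$0$, and by conditioning on the first point, $G$ satisfies
\[
G(x)=x\,W_{1,0}(x)+(1-x)\,W_{0,1}(x),
\]
where $W_{i,j}$ is the winning probability from score $(i,j)$. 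A clean route to monotonicity is a coupling argument: realise the points as $\mathbf{1}\{U_k<x\}$ for i.i.d.\ uniforms $U_k$. Increasing $x$ only turns lost points into won points. The event ``server wins the game'' is monotone in the point sequence, because changing a lost point to a won point can never turn a won game into a lost one. Hence $G$ is nondecreasing.

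For strictness we would note that for $x<y$ the event that the first four points have $U_k\in[x,y)$ has positive probability. On this event the server loses $0$--$4$ under $x$ but wins $4$--$0$ under $y$, so $G(y)-G(x)>0$.

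The main obstacle is making the monotonicity of the game-winning event rigorous, since the deuce stage involves unbounded sequences. If that feels delicate, we would fall back to the calculus route. Differentiating gives
\[
G'(x)=\frac{20x^3(1-x)^3}{(1-2x+2x^2)^2}\cdot c,
\]
and we expect it to factor into this form with $c$ a positive constant or a positive polynomial. This guess comes from the heuristic that $G'(x)$ equals the expected number of points that are ``pivotal''. We would verify it by expanding the numerator $N'D-ND'$ with $N=x^4(15-34x+28x^2-8x^3)$. We would then check that the expansion factors as $20x^3(1-x)^3$ times a positive quantity, and positivity on $(0,1)$ is then immediate.
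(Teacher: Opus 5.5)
Your proof is correct, but your main argument takes a different route from the paper. The paper does no probability. It differentiates the closed form and factors:
\[
G'(x)=\frac{20x^3(1-x)^3\,(3-4x+4x^2)}{(1-2x+2x^2)^2},
\]
and every factor is positive on $(0,1)$, since $3-4x+4x^2=2+(1-2x)^2$. That quadratic is exactly the factor $c$ your fallback leaves undetermined. So your fallback, as written, is only a plan and not a proof.

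You do not need the fallback, because the coupling argument is complete once you use, as the paper does via Newton--Keller, that $G(x)$ is the probability that the server wins a game. The step you flag as delicate is short. Fix $x<y$ and couple the two point sequences as you describe. Suppose the server wins under $x$ at the a.s.\ finite time $\tau$. At every time $t\le\tau$, under $y$ the server's count is at least, and the receiver's count at most, its value under $x$.
\begin{enumerate}
\item The server's winning condition (at least $4$ points, lead at least $2$) therefore holds under $y$ by time $\tau$.
\item A receiver win under $y$ at some $t<\tau$ would mean the receiver's winning condition held under $x$ at time $t$. That contradicts the game under $x$ ending at $\tau$.
\end{enumerate}
So the server wins under $y$. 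The deuce stage causes no trouble, because only the finitely many points before termination matter. Your four-point event then gives $G(y)-G(x)\ge (y-x)^4>0$.

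As for what each route gives you: the coupling is conceptual and needs no closed form. The same argument shows that tiebreak, set and match win probabilities increase in a player's own serve probability. The paper's computation is a two-line check and yields the explicit derivative. For example, $G'(1/2)=5/2$, which quantifies the steep sigmoid slope near $\mathcal{P}_A\approx\mathcal{P}_B$ that the paper invokes in its residual analysis.
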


\begin{proof}
We have
\[
G^{'}(x)
=
\frac{20x^3(1-x)^3(3+4x(x-1))}
{(2(x-1)x+1)^2}.
\]
The claim follows noting that each factor on the right-hand side is positive for $0<x<1$.
\end{proof}

\begin{corollary} If \(\mathcal{P_A}+\mathcal{P_B}>1\), then \(\mathcal{G}_{A}+\mathcal{G}_B>1\).
\end{corollary}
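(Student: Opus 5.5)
The plan is to combine Lemma~\ref{lem:G-monotone} with the symmetry $G(1-x) = 1 - G(x)$ for $0 \le x \le 1$. If $\mathcal{P}_A + \mathcal{P}_B > 1$, then $\mathcal{P}_A > 1 - \mathcal{P}_B$. Both numbers lie in $(0,1)$ in any nondegenerate case, and the endpoint cases are handled by continuity of $G$ on $[0,1]$. Since $G$ is strictly increasing,
\[
\mathcal{G}_A = G(\mathcal{P}_A) > G(1-\mathcal{P}_B) = 1 - G(\mathcal{P}_B) = 1 - \mathcal{G}_B,
\]
which is exactly the claim.

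The symmetry has a probabilistic reason. A game is won by the server exactly when the receiver fails to win it. The receiver wins each point with probability $1-x$, and the rules are symmetric between the two players, so $G(x) + G(1-x) = 1$.

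I would still verify this algebraically from \eqref{eqn:gameprob}, since that is where all the work lies. First note that $1 - 2x(1-x) = x^2 + (1-x)^2$, which is invariant under $x \mapsto 1-x$. Write $D = x^2 + (1-x)^2$. Then
\[
G(x) = \frac{x^4\bigl((15-4x)D - 10x^2\bigr)}{D},
\]
and the identity to check becomes the polynomial identity
\[
x^4\bigl((15-4x)D - 10x^2\bigr) + (1-x)^4\bigl((11+4x)D - 10(1-x)^2\bigr) = D.
\]
Both sides have degree at most $7$, so I would either expand them directly or check agreement at eight points, for instance $x = 0, 1, \tfrac12, 2, -1, \dots$. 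The values at $x=0$ and $x=1$ give $1 = 1$ immediately, and at $x = \tfrac12$ both sides equal $\tfrac12$. I expect this expansion to be the only mildly laborious step, and it is routine.

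Two endpoint remarks complete the argument. If $\mathcal{P}_A = 1$ and $\mathcal{P}_B > 0$, then $\mathcal{G}_A = 1$ and $\mathcal{G}_B > 0$, so the sum exceeds $1$. Monotonicity extends to the closed interval $[0,1]$ by continuity, so $G$ is strictly increasing there as well and the strict inequality is preserved.
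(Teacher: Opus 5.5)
Your proof is correct and matches the paper's argument: the strict monotonicity of $G$ from Lemma~\ref{lem:G-monotone}, combined with the symmetry $G(1-x)=1-G(x)$, gives $\mathcal{G}_A=G(\mathcal{P}_A)>G(1-\mathcal{P}_B)=1-\mathcal{G}_B$. The paper uses the symmetry without proof and does not treat endpoint values, so your probabilistic justification, the polynomial check (the identity does hold), and the continuity remark are extra care beyond the paper.
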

\begin{proof}
Let $G(x)$
be the function \eqref{eqn:gameprob}.
By Lemma~\ref{lem:G-monotone}, $G$ is strictly increasing on \((0,1)\). Then, using $G(1-x)=1-G(x)$, the assumption
$\mathcal{P}_{A}>1-\mathcal{P}_{B}$ and since $G$ is strictly increasing,
\[
\G_{A}=G(\mathcal{P}_{A})>G(1-\mathcal{P}_{B})=1-G(\mathcal{P}_{B})=1-\G_{B},
\]
as required.
\end{proof}

\section{Order of serve effect at set level}
\begin{proposition}\label{prop:set-length-shift}
Assume $\G_A + \G_B > 1$. Write $T^\text{set}_A$ for the expected number of games in a set, where $A$ serves first. Then $T^\text{set}_A < T^\text{set}_B$ if $\G_A>\G_B$.
\end{proposition}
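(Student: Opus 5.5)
The plan is to write $T^{\text{set}}_\sigma=\sum_{n,m} (n+m)\,\S^{\sigma}_{nm}$, summing over all terminal scores, and compare the two sums term by term.

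\textbf{Step 1: sets of 6, 12 and 13 games.} By \eqref{NK18}, the scores 6--0, 7--5 and 7--6 contribute identically to $T^{\text{set}}_A$ and $T^{\text{set}}_B$, once we account for relabelling by the first server. So do their mirror images 0--6, 5--7 and 6--7, by the same symmetry applied with the roles of the players exchanged. Hence only totals of 7, 8, 9 and 10 games matter.

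\textbf{Step 2: pairing the middle scores.} By \eqref{NK23} and its mirror, the total probability mass on $\{7,8\}$ games, and separately on $\{9,10\}$ games, is the same whichever player serves first. It follows that
\[
T^{\text{set}}_B-T^{\text{set}}_A=\Big(\S^{B}_{26}+\S^{B}_{62}-\S^{A}_{62}-\S^{A}_{26}\Big)+\Big(\S^{B}_{46}+\S^{B}_{64}-\S^{A}_{64}-\S^{A}_{46}\Big),
\]
where each bracket records the extra game gained when mass moves from the 7-game score to the 8-game score, or from the 9-game score to the 10-game score. It therefore suffices to show each bracket is positive.

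\textbf{Step 3: explicit formulas.} Write $g=\G_A$ and $h=\G_B$. In an even-length set such as 6--2 or 4--6, each player has served exactly half the games. With $A$ serving first, $A$ serves the odd-numbered games and the last game is served by $B$. Consider 6--2 for $A$ with $A$ serving first. The last game is served by $B$ and won by $A$, so $A$ breaks it with probability $1-h$. Among the first 7 games, $A$ serves 4 and $B$ serves 3, and $A$ wins 5 of them. Counting the ways gives
\[
\S^{A}_{62}=(1-h)\sum_{i}\binom{4}{i}\binom{3}{5-i}g^i(1-g)^{4-i}(1-h)^{5-i}h^{3-(5-i)}.
\]
The other seven probabilities in the two brackets are obtained by the same counting, using the appendix formulas. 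Each bracket is then a polynomial in $g$ and $h$.

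\textbf{Step 4: factoring.} I expect each bracket to factor as
\[
(g-h)\,(g+h-1)\,Q(g,h),
\]
with $Q\ge0$ on $[0,1]^2$. The reason is that both brackets must vanish when $g=h$, where the two players are interchangeable, and also when $g+h=1$. In the latter case every game is won by its server's opponent with the same probability as by its server, so the serving order cannot affect anything.

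The main obstacle is verifying the factorisation and proving $Q\ge0$. I would first extract the two linear factors symbolically. Then I would try to write $Q$ as a sum of products of the nonnegative quantities $g,\ 1-g,\ h,\ 1-h$, so that positivity follows term by term. The assumptions $\G_A>\G_B$ and $\G_A+\G_B>1$ then make the two linear factors positive, which gives $T^{\text{set}}_A<T^{\text{set}}_B$.
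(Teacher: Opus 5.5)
Your plan is essentially the paper's proof. It uses the same cancellation of the $6$-, $12$- and $13$-game totals via \eqref{NK18} and the same pairing of $7$ with $8$ and $9$ with $10$ games via \eqref{NK23}; your two brackets are just $\pi_7^A-\pi_7^B$ and $\pi_9^A-\pi_9^B$, where $\pi_n^\sigma$ is the probability of an $n$-game set with first server $\sigma$. The paper's direct computation confirms your predicted factorisation, and the step you leave open, $Q\ge0$, closes as you hoped, with $Q_1=3\bigl(g(1-h)+h(1-g)\bigr)\bigl(gh+(1-g)(1-h)\bigr)$ and $Q_2=4\bigl(gh+(1-g)(1-h)\bigr)\bigl(g^2h^2+5gh(1-g)(1-h)+(1-g)^2(1-h)^2\bigr)$. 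Both are strictly positive on $(0,1)^2$, and you need strict positivity, not just $Q\ge0$, to get the strict inequality. This sum-of-products form is slightly cleaner than the paper's argument for the last factor, which relies on $0<u\le p$ and the hypothesis $\G_A+\G_B>1$.
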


\begin{proof}
Let $\pi_n^{A}$ denote the probability that a set, with initial server $A$, ends with a total of $n$ games played. Similarly define $\pi_n^{B}$. Then, clearly $\pi_{n+m}^{A}=\S_{nm}^{A}+\S_{mn}^{A}$ and similarly for $B$. By \eqref{NK23},
\begin{align*}
\pi_7^{A}-\pi_7^{B}
&=(\S_{61}^{A}+\S_{16}^{A})-(\S_{61}^{B}+\S_{16}^{B}) \\
&= -(\S_{62}^{A}+\S_{26}^{A})+(\S_{62}^{B}+\S_{26}^{B}) \\
&=-(\pi_8^{A}-\pi_8^{B}).
\end{align*}
Similarly,
\[
\pi_9^{A}-\pi_9^{B}=-(\pi_{10}^{A}-\pi_{10}^{B}).
\]

A direct computation gives
\[
\pi_7^{A}-\pi_7^{B}
=
3(\G_A-\G_B)(\G_A+\G_B-1)\bigl(\G_A+\G_B-2\G_A\G_B\bigr)\bigl(1-\G_A-\G_B+2\G_A\G_B\bigr).
\] 
Every factor on the right-hand side is strictly positive under our hypotheses, so
\[
\pi_7^{A}-\pi_7^{B}>0, \quad \text{and thus} \quad \pi_8^{A}-\pi_8^{B}<0.
\]
We also have
\begin{align*}
\pi_9^{A}-\pi_9^{B}
&=
4(\G_A-\G_B)(\G_A+\G_B-1)\bigl(1-\G_A-\G_B+2\G_A\G_B\bigr) \\
&\qquad\times \Big(1 - 2 \G_A + \G_A^2 - 2 \G_B + 9 \G_A \G_B - 7 \G_A^2 \G_B + \G_B^2 - 7 \G_A \G_B^2 + 
 7 \G_A^2 \G_B^2\Big).
\end{align*}
The first three factors on the right-hand side are strictly positive under the stated hypotheses. To show the last factor, denoted by $L$, is positive, put $s=\G_A+\G_B$, and $p=\G_A\G_B$. Then
\[
L=(1-s)^2+7p(1-s)+7p^2.
\]
Since $\G_A+\G_B>1$, write $u=s-1>0$. Then $L=u^2-7pu+7p^2$.
 Moreover, because $0<\G_A,\G_B<1$, we have $(1-\G_A)(1-\G_B)\ge 0$,
and hence $s-1\le \G_A\G_B=p$. Thus $0<u\le p$. The function
\[
u\mapsto u^2-7pu+7p^2
\]
is decreasing on $[0,p]$, since $2u-7p<0$ there. Therefore $L\ge p^2>0$. Hence
\[
\pi_9^{A}-\pi_9^{B}>0 \quad \text{and thus} \quad \pi_{10}^{A}-\pi_{10}^{B}<0.
\]
By equation \eqref{NK18}, one has
\[
\pi_6^{A}=\pi_6^{B},\qquad
\pi_{12}^{A}=\pi_{12}^{B},\qquad
\pi_{13}^{A}=\pi_{13}^{B}. 
\]
Therefore
\begin{align*}
T^\text{set}_A - T^\text{set}_B
&=\sum_{n=6}^{13} n\bigl(\pi_n^{A}-\pi_n^{B}\bigr) \\
&=7(\pi_7^{A}-\pi_7^{B})+8(\pi_8^{A}-\pi_8^{B}) \\
&\qquad +9(\pi_9^{A}-\pi_9^{B})+10(\pi_{10}^{A}-\pi_{10}^{B}) \\
&=-(\pi_7^{A}-\pi_7^{B})-(\pi_9^{A}-\pi_9^{B}) \\
&<0,
\end{align*}
as required.
\end{proof}
\begin{proposition}\label{prop:set-margin-shift}
Let $H$ denote the the number of games won by $A$ minus games won by $B$ in a given set and write $H^\text{set}_A=\E(H\mid S=A)$, and $H^\text{set}_B=\E(H\mid S=B)$, where $S$ is the first server. Assume that $\G_A+\G_B>1$. Then $H^\text{set}_A>H^\text{set}_B$.
\end{proposition}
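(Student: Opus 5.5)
Our plan is to mirror the proof of Proposition~\ref{prop:set-length-shift}, working with the score distribution instead of the total-games distribution. Write
\[
H^\text{set}_A-H^\text{set}_B=\sum_{(n,m)}(n-m)\bigl(\text{Pr}_A(n,m)-\text{Pr}_B(n,m)\bigr),
\]
where $\text{Pr}_\sigma(n,m)$ is the probability that, with first server $\sigma$, the set ends with $A$ winning $n$ games and $B$ winning $m$. With first server $A$ we have $\text{Pr}_A(n,m)=\S^{A}_{nm}$. With first server $B$ the subscripts of $\S^B$ count $B$'s games first, so $\text{Pr}_B(n,m)=\S^{B}_{mn}$. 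Identities \eqref{NK18} state that $\S^A_{60}=\S^B_{06}$, i.e.\ the probability that $A$ wins $6$--$0$ is the same whichever player serves first, and similarly for $7$--$5$ and $7$--$6$. Provided the appendix gives the analogous statements with the roles of $A$ and $B$ swapped (which we expect by the same symmetry argument), the scores $6$--$0$, $0$--$6$, $7$--$5$, $5$--$7$, $7$--$6$ and $6$--$7$ all cancel. Only the scores with final margin $5,4,3,2$ in either direction then survive, and the difference collapses to
\[
5(d_{61}-d_{16})+4(d_{62}-d_{26})+3(d_{63}-d_{36})+2(d_{64}-d_{46}),
\]
where $d_{nm}$ denotes the difference between the two serve orders of the probability that the score is $n$--$m$ in $A$'s favour.

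Next we use \eqref{NK23}, which says $d_{61}=-d_{62}$ and $d_{63}=-d_{64}$, together with its $A\leftrightarrow B$ analogue giving $d_{16}=-d_{26}$ and $d_{36}=-d_{46}$. Substituting, the sum becomes
\[
(d_{61}-d_{16})+(d_{63}-d_{36}).
\]
So it suffices to show that this quantity is positive. Intuitively, when $A$ serves first, odd-length sets ending $6$--$1$ or $6$--$3$ are won by the first server on their last game. This shifts mass towards $A$ winning with an extra break.

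We will compute $d_{61}-d_{16}$ and $d_{63}-d_{36}$ explicitly from the appendix formulas, as polynomials in $\G_A,\G_B$. We expect them to factor like $\pi_7^A-\pi_7^B$ and $\pi_9^A-\pi_9^B$ did, for instance with a factor $(\G_A+\G_B-1)$. Here the relevant factor should probably not involve $\G_A-\G_B$, since the margin shift should be present even for equal players, with $A$ favoured by serving first in a short set. We will try to write each difference as a product of factors such as $(\G_A+\G_B-1)$, $(\G_A+\G_B-2\G_A\G_B)$ and $(1-\G_A-\G_B+2\G_A\G_B)$, and a residual polynomial.

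The main obstacle is proving that residual polynomial positive on the region $0<\G_B\le \G_A<1$ with $\G_A+\G_B>1$. As before, we would substitute $s=\G_A+\G_B$, $p=\G_A\G_B$ and $u=s-1$, and use the constraint $0<u\le p$ to reduce the problem to a one-variable sign check, as was done for $L$. If the residual is not symmetric in $\G_A,\G_B$, we would instead parametrize by $x=1-\G_B$ and $y=\G_A$ with $0<x<y<1$, and show positivity term by term after expanding in $y-x$.
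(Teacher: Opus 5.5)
Your reduction is correct and takes a different, sharper route than the paper. The paper uses \eqref{NK18} only to cancel the $6$--$0$, $7$--$5$, $7$--$6$ terms. It then expands the weighted sum $5(\Delta_{61}-\Delta_{16})+\dots+2(\Delta_{64}-\Delta_{46})$ into a degree-six polynomial and tries to sign its cofactor $L$ via $s,p,u,q$. You also apply \eqref{NK23} and its mirror image, which holds simply by relabelling the players, so the appendix is not needed. That collapses everything to $(d_{61}-d_{16})+(d_{63}-d_{36})$.

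What is missing is the last step: you stop there and treat the sign as the ``main obstacle'' needing a residual-polynomial analysis. That obstacle does not exist. Condition on the score before the final game. The first $6$ (resp.\ $8$) games contain three (resp.\ four) service games for each player, whoever serves first. So the only difference is who serves game $7$ (resp.\ $9$), namely the first server. With $u=\G_A+\G_B-1$ this gives $d_{61}=u\,\P(A\text{ leads }5\text{--}1\text{ after 6 games})$ and $d_{16}=-u\,\P(B\text{ leads }5\text{--}1\text{ after 6 games})$. The same holds for $6$--$3$ after $8$ games, so
\[
H^{\mathrm{set}}_A-H^{\mathrm{set}}_B=u\,\bigl[\P(\text{first 6 games split }5\text{--}1)+\P(\text{first 8 games split }5\text{--}3)\bigr]>0,
\]
with splits counted in either player's favour. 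Explicitly, with $q=1-\G_A-\G_B+2\G_A\G_B$,
\[
d_{61}-d_{16}=3uq\bigl(\G_A^2(1-\G_B)^2+\G_B^2(1-\G_A)^2\bigr),
\]
\[
d_{63}-d_{36}=4uq(1-q)\bigl(q^2+3\G_A(1-\G_A)\G_B(1-\G_B)\bigr).
\]
So your predicted factors are exactly the right ones, and the cofactors are manifestly positive.

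What your route buys over the paper's:
\begin{itemize}
\item The answer is $u$ times a sum of probabilities, so the sign is visibly that of $\G_A+\G_B-1$.
\item No ordering $\G_A\ge\G_B$ is used, and the effect is present for equal players, as you anticipated.
\item No brute-force expansion is needed.
\end{itemize}
The last point matters, because the paper's rewriting of $L$ in terms of $s,p$ and its factorisation $L=-(1+u)(3+q+21q^2+4u+24qu+7u^2)$ do not agree with the displayed polynomial. At $\G_A=\G_B=0.8$ the displayed $L$ is about $-0.84$, which matches the formula above (a margin shift of about $0.34$ games). The factorised form gives about $-45$ there. Completed as above, your approach gives a correct version of that step.
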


\begin{proof}
Let
\[
\Delta_{nm}=\S_{nm}^{A}-\S_{mn}^{B}.
\]
Then
\begin{align}
H_A^{\mathrm{set}}-H_B^{\mathrm{set}}
=
\sum_{k=0}^{4} (6-k)\bigl(\Delta_{6k}-\Delta_{k6}\bigr)
+2(\Delta_{75}-\Delta_{57})
+(\Delta_{76}-\Delta_{67}).
\label{eq:margin-expand}
\end{align}
Using \eqref{NK18}, we have
\[
6(\Delta_{60}-\Delta_{06})
+2(\Delta_{75}-\Delta_{57})
+(\Delta_{76}-\Delta_{67})
=0.
\]
A direct computation of the remaining terms gives
\begin{align*}
&5(\Delta_{61}-\Delta_{16})
 +4(\Delta_{62}-\Delta_{26}) \notag
 +3(\Delta_{63}-\Delta_{36})
 +2(\Delta_{64}-\Delta_{46})=\\ &
-\,(\G_A + \G_B -1)\,(1-\G_A-\G_B+2\G_A\G_B)
\Big(
-4 \G_A + 5 \G_A^2 - 4 \G_A^3 
- 4 \G_B + 24 \G_A \G_B - 50 \G_A^2\\  &\G_B + 36 \G_A^3 \G_B
+ 5 \G_B^2 - 50 \G_A \G_B^2 + 122 \G_A^2 \G_B^2 - 84 \G_A^3 \G_B^2
- 4 \G_B^3 + 36 \G_A \G_B^3 - 84 \G_A^2 \G_B^3 + 56 \G_A^3 \G_B^3
\Big).
\end{align*}
Write $L(\G_A,\G_B)$ for the last factor. Since $L$ is symmetric, put $s=\G_A+\G_B$, and $p=\G_A \G_B$.
A direct expansion gives
\[
L
=
-4s-38ps-84p^2s+5s^2+36ps^2-4s^3
-10p+12ps .
\]
Now set
\[
u=s-1=\G_A+\G_B-1,
\qquad
q=1-s+2p
=1-\G_A-\G_B+2\G_A \G_B .
\]
Then 
\[
L
=
-(1+u)\bigl(3+q+21q^2+4u+24qu+7u^2\bigr).
\]
Under the assumptions $\G_A+\G_B>1$ and $0<\G_A,\G_B<1$, we have
\[
u>0,
\qquad
q=\G_A \G_B+(1-\G_A)(1-\G_B)>0.
\]
Hence every term in the second factor is positive, and therefore $L(\G_A,\G_B)<0$, as required.
\end{proof}
\section{Order of serve effect at match level}
The following claim provides a reduction of the match-level dependence on the order of serve to the initial server of the first set. In particular, it encodes the evolution of serve order across sets in terms of first set quantities, allowing totals and margin comparisons at the match level to be expressed through first set asymmetries.
\begin{claim}\label{claim:set_transition}
Define the one-set transition probabilities
\[
\alpha_\ell = \P(W=A,\ L=\ \ell \mid S=A),\quad
\beta_\ell = \P(W=B,\ L=\ \ell \mid S=A),
\]
\[
\gamma_\ell = \P(W=A,\ L=\ \ell \mid S=B),\quad
\delta_\ell = \P(W=B,\ L=\ \ell \mid S=B),
\quad \ell\in\{\mathrm{e},\mathrm{o}\}.
\]
where $L$ is the set length parity, writing $e$ for even and $o$ for odd, $S$ is the initial server and $W$ the set winner. Write $p$ for the probability that $A$ wins a set. Then
\[
\alpha_e+\alpha_o=\gamma_e+\gamma_o=p,\qquad
\beta_e+\beta_o=\delta_e+\delta_o=1-p.
\]

Let \(S_j\in\{A,B\}\) denote the first server of set \(j\), let \(N\in\{2,3\}\) be the
number of sets. The following quantities determine the transition structure:
\[
q_A = \P(S_2=A \mid S_1=A)=\alpha_e+\beta_e,\qquad
q_B = \P(S_2=A \mid S_1=B)=\gamma_o+\delta_o,
\]
and 
\[
\rho_A = \P(S_3=A,\ N=3 \mid S_1=A)
=2\alpha_e\beta_e+\alpha_o\delta_o+\beta_o\gamma_o,
\]
\[
\rho_B = \P(S_3=A,\ N=3 \mid S_1=B)
=\gamma_e\delta_o+\gamma_o\beta_e+\delta_e\gamma_o+\delta_o\alpha_e.
\]

Setting $x=\alpha_e-\gamma_o$, and $y=\beta_e-\delta_o$, we have
\[
q_A-q_B = x+y,\qquad \rho_A-\rho_B = 2xy.
\]
\end{claim}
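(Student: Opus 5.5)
The plan is to obtain every identity by conditioning on the first server and the parity of each set's length. Two facts from earlier drive everything. First, the result of \cite{NewKel} quoted in the Background says the probability of winning a set does not depend on who serves first. Second, since service alternates game by game and the point probabilities are constant, once the first server of a set is fixed, that set is independent of everything before it.

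\textbf{Step 1: the marginal identities.} Summing over parity gives $\alpha_e+\alpha_o=\P(W=A\mid S=A)$ and $\gamma_e+\gamma_o=\P(W=A\mid S=B)$. The \cite{NewKel} result, equivalently summing \eqref{NK18} and \eqref{NK23} over the scores won by $A$, shows both equal the same number $p$. Taking complements gives $\beta_e+\beta_o=\delta_e+\delta_o=1-p$.

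\textbf{Step 2: the transition probabilities.} Games alternate servers, so the player who serves first in set $j+1$ is the first server of set $j$ if set $j$ has an even number of games, and the other player if it is odd. Hence $\P(S_2=A\mid S_1=A)=\P(\text{set }1\text{ even}\mid S_1=A)=\alpha_e+\beta_e$. Likewise $\P(S_2=A\mid S_1=B)=\gamma_o+\delta_o$.

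For $\rho_A$ I will enumerate the histories in which the first two sets are split, so $N=3$. Then $S_3=A$ given $S_1=A$ exactly when the two set lengths have even sum. I condition on set $1$ (its winner and parity), which fixes $S_2$, and then use the independence above to multiply by the probability that set $2$ goes to the other player with the matching parity. This yields four products:
\begin{itemize}
\item $\alpha_e\beta_e$: $A$ wins an even set, $S_2=A$, then $B$ wins an even set;
\item $\beta_e\alpha_e$: the same with the winners reversed;
\item $\alpha_o\delta_o$: $A$ wins an odd set, $S_2=B$, then $B$ wins an odd set;
\item $\beta_o\gamma_o$: $B$ wins an odd set, $S_2=B$, then $A$ wins an odd set.
\end{itemize}
Their sum is the stated $\rho_A$.

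The same enumeration for $S_1=B$ needs an odd total length. It gives $\gamma_e\delta_o$, $\gamma_o\beta_e$, $\delta_e\gamma_o$ and $\delta_o\alpha_e$, which sum to $\rho_B$. The only care needed here is bookkeeping: the second factor must use the correct server of set $2$, that is, $\alpha,\beta$ when $S_2=A$ and $\gamma,\delta$ when $S_2=B$.

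\textbf{Step 3: the differences.} The identity $q_A-q_B=(\alpha_e-\gamma_o)+(\beta_e-\delta_o)=x+y$ is immediate. For $\rho_A-\rho_B$, write $a=\alpha_e$, $b=\beta_e$, $g=\gamma_o$ and $d=\delta_o$. Use Step 1 to eliminate the remaining variables: $\alpha_o=p-a$, $\gamma_e=p-g$, $\beta_o=1-p-b$ and $\delta_e=1-p-d$. Then
\[
\rho_A=2ab+(p-a)d+(1-p-b)g,\qquad \rho_B=(p-g)d+gb+(1-p-d)g+da .
\]
In the difference, the terms $pd$, $pg$ and $g$ cancel. What remains is
\[
2ab-2ad-2bg+2gd=2(a-g)(b-d)=2xy .
\]
I expect the main obstacle to be the correctness of the enumeration in Step 2, not this final algebra. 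In particular, the cancellation depends on the parity rule assigning the right server to set $2$ and set $3$.
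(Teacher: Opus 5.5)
Your proposal is correct and follows essentially the paper's route. The paper also enumerates the four split-set histories for each initial server, using the parity rule for who starts the next set. It then eliminates $\alpha_o,\beta_o,\gamma_e,\delta_e$ via the marginal identities to factor $\rho_A-\rho_B$ as $2(\alpha_e-\gamma_o)(\beta_e-\delta_o)$. You additionally make explicit several points the paper leaves implicit: the marginal identities (via \eqref{NK18} and \eqref{NK23}), the formulas for $q_A,q_B$, and the independence of a set from earlier play given its first server.
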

\begin{proof}
A third set is played exactly when the first two sets are split. Under
$S_1=A$, the only ways that set $3$ is played and started by $A$ are
\[
(A\text{ wins even},\,B\text{ wins even}),
\quad
(B\text{ wins even},\,A\text{ wins even}),
\]
\[
(A\text{ wins odd},\,B\text{ wins odd}),
\quad
(B\text{ wins odd},\,A\text{ wins odd}),
\]
giving the equation for $\rho_A$. We similarly derive $\rho_B$. Also,
\begin{align*}
\rho_A-\rho_B
&=2\alpha_e\beta_e+\alpha_o\delta_o+\beta_o\gamma_o
-\gamma_e\delta_o-\gamma_o\beta_e-\delta_e\gamma_o-\delta_o\alpha_e\\
&=2\alpha_e\beta_e
+(p-\alpha_e)\delta_o
+(1-p-\beta_e)\gamma_o \\
&\qquad
-(p-\gamma_o)\delta_o
-\gamma_o\beta_e
-(1-p-\delta_o)\gamma_o
-\delta_o\alpha_e\\
&=2\alpha_e\beta_e
-2\alpha_e\delta_o
-2\beta_e\gamma_o
+2\gamma_o\delta_o\\
&=2(\alpha_e-\gamma_o)(\beta_e-\delta_o)
=2xy.
\end{align*}
\end{proof}

\begin{proposition}\label{prop:match_exp_totals}
Let $T$ denote the total number of games in a best-of-three sets match between $A$ and $B$. Assume $\G_A + \G_B > 1$. Then, writing $T^\text{match}_A= \E(T\mid S_1=A)$ and $T^\text{match}_B = \E(T\mid S_1=B)$, we have $T^\text{match}_A<T^\text{match}_B$ if $\G_A>\G_B$.

\end{proposition}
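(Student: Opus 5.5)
Write $\mu_A=T^{\text{set}}_A$ and $\mu_B=T^{\text{set}}_B$. By Proposition~\ref{prop:set-length-shift}, $\mu_A<\mu_B$. The plan is to decompose the match total by sets and condition on the first server of each set, using Claim~\ref{claim:set_transition} to track serve order. The expected number of games in a set depends only on who serves first in it. It does not depend on what happened in earlier sets, because points are i.i.d.\ given the server. Hence
\[
T^{\text{match}}_{\sigma}=\mu_\sigma+\bigl(q_\sigma\mu_A+(1-q_\sigma)\mu_B\bigr)+\E\bigl[\mathbf 1_{N=3}\,\mu_{S_3}\mid S_1=\sigma\bigr],
\]
where the last term equals $\rho_\sigma\mu_A+(r-\rho_\sigma)\mu_B$. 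Here $r=\P(N=3)=2p(1-p)$. By the Newton--Keller independence, $r$ does not depend on the first server: the set winner probability $p$ is the same for both servers, so the probability that the first two sets are split is $2p(1-p)$ regardless of $S_1$.

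Subtracting the two expressions gives
\[
T^{\text{match}}_A-T^{\text{match}}_B=(\mu_A-\mu_B)\bigl(1+(q_A-q_B)+(\rho_A-\rho_B)\bigr)=(\mu_A-\mu_B)\bigl(1+x+y+2xy\bigr).
\]
Since $\mu_A-\mu_B<0$, it remains to show $1+x+y+2xy>0$. I will rewrite this as $\tfrac12\bigl[(1+2x)(1+2y)+1\bigr]$, so it suffices to have $x,y>-\tfrac12$, or at least $(1+2x)(1+2y)>-1$.

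For the bound, note that $x=\alpha_e-\gamma_o\ge -\gamma_o\ge -p$ and $y\ge-\delta_o\ge -(1-p)$. This crude bound is not quite enough, since it only gives $x,y\ge -1$. The finer estimate will come from odd set lengths. An odd-length set that $B$ wins while $B$ serves first, or that $A$ wins while $B$ serves first, requires specific parities of the final score. Each such score has $\S^B_{nm}$ with $n+m$ odd. So I will bound $\gamma_o+\delta_o=q_B\le 1$ and $\alpha_e+\beta_e=q_A\le1$, which give $x+y=q_A-q_B\in[-1,1]$. Also $|x|\le \max(\alpha_e,\gamma_o)\le p$ and $|y|\le 1-p$.

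Now I minimize $1+x+y+2xy$ over the box $|x|\le p$, $|y|\le 1-p$. The expression is bilinear, so its minimum sits at a corner. The candidate corners are $(p,-(1-p))$, giving $1+2p-1-2p(1-p)=2p^2$, and $(-p,1-p)$, giving $2(1-p)^2$. The remaining corners are $(p,1-p)$, giving $2+2p(1-p)$, and $(-p,-(1-p))$, giving $2p(1-p)$. All four values are $\ge0$, and they are strictly positive for $0<p<1$. So the bracket is positive, and $T^{\text{match}}_A<T^{\text{match}}_B$ follows.

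The main obstacle is the first step: justifying that the third-set expectation can be written as $\rho_\sigma\mu_A+(r-\rho_\sigma)\mu_B$ with the same $r$ for both $\sigma$. This relies on $p$ being server-independent, which is given in Claim~\ref{claim:set_transition}. The corner evaluation is routine once the box constraints are justified. If the constraint $|x|\le p$ turns out to be awkward to prove directly, I would instead use $x\in[-\gamma_o,\alpha_e]\subset[-p,p]$, which is immediate.
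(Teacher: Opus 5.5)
Your proposal is correct and is essentially the paper's proof. You reduce to $T^{\mathrm{match}}_A-T^{\mathrm{match}}_B=(T^{\mathrm{set}}_A-T^{\mathrm{set}}_B)(1+x+y+2xy)$ via Claim~\ref{claim:set_transition} and the server-independence of $p$ (your per-set conditioning simply makes explicit the paper's identity in terms of $\E(N)$ and $\E(N_A)$), then minimize the bilinear form over $[-p,p]\times[-(1-p),1-p]$ at its four corners; the detour through $x,y>-\tfrac12$ and odd set lengths is unnecessary, since those box bounds alone already give positivity.
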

\begin{proof}
Write $N\in \{2, 3\}$ for the number of sets played and $N_A$ for those started by $A$. Let $T_1$ denote the total number of games won by $A$ in the first set, $T^\text{set}_A = \E(T_1\mid S_1=A)$,
and similarly define $T^\text{set}_B$. Then
\[
\E(T\mid S_1=\sigma)
=
T^\text{set}_B\cdot\,\E(N\mid S_1=\sigma)
+
(T^\text{set}_A-T^\text{set}_B)\,\E(N_A\mid S_1=\sigma),
\]
for $\sigma\in\{A, B\}$. By \cite[Section 3.3]{NewKel}, probability of winning a set is independent of the first server, and so $\E(N\mid S_1=A)=\E(N\mid S_1=B)$, giving
\[
T^\text{match}_A-T^\text{match}_B
=
(T^\text{set}_A-T^\text{set}_B)(M_A-M_B),
\]
where
\[
M_A=\E(N_A\mid S_1=A),\qquad M_B=\E(N_A\mid S_1=B).
\]
We compute \(M_A-M_B\). Using Claim~\ref{claim:set_transition}, and $M_A = 1 + q_A + \rho_A$,
$M_B = q_B + \rho_B$.

Hence
\[
M_A-M_B = 1 + (q_A-q_B) + (\rho_A-\rho_B).
\]
Writing $x=\alpha_e-\gamma_o$, and $y=\beta_e-\delta_o$, Claim~\ref{claim:set_transition} gives
\[
M_A-M_B = 1 + x + y + 2xy.
\]
Since $F(x,y)=1 + x + y + 2xy$ is affine in each variable separately, its minimum over the rectangle $[-p,p]\times[-(1-p),1-p]$ is attained at a corner. At the four corners, $F$ takes the values $2p(1-p)$, $2(1-p)^2$, $2p^2$, and $2+2p(1-p)$,
all of which are strictly positive for $0<p<1$. Hence $M_A-M_B>0$.
Since, by Proposition~\ref{prop:set-length-shift}, \(T^\text{set}_A-T^\text{set}_B<0\), it follows that $T^\text{match}_A<T^\text{match}_B$
as required.
\end{proof}

\begin{proposition}\label{prop:match_exp_handicaps}
Assume that $\G_A+\G_B>1$. Let $H^\text{match}_A=\E(H\mid S_1=A)$, and  $H^\text{match}_B=\E(H\mid S_1=B)$, where $H$ is the difference between number of games won by $A$ and $B$. 
Then $H^\text{match}_A>H^\text{match}_B$.
\end{proposition}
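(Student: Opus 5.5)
The plan is to copy the proof of Proposition~\ref{prop:match_exp_totals}, with the set margin taking the place of the set length. Write $H_j$ for the margin (games won by $A$ minus games won by $B$) in set $j$. Then $H=\sum_{j\le N}H_j$. The conditional law of each set, given the current score in sets, depends only on who serves first in that set. Decompose $H$ by first server of each set and by set winner:
\[
\E(H\mid S_1=\sigma)=\sum_{j}\P(\text{set } j \text{ played})\,\E(H_j\mid \text{history}).
\]
A first attempt is to mimic the totals argument: write $\E(H\mid S_1=\sigma)=H^\text{set}_B\,\E(N\mid S_1=\sigma)+(H^\text{set}_A-H^\text{set}_B)\,\E(N_A\mid S_1=\sigma)$. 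Then, by Proposition~\ref{prop:set-margin-shift} and the positivity $M_A-M_B>0$ established in the proof of Proposition~\ref{prop:match_exp_totals}, we would get $H^\text{match}_A-H^\text{match}_B=(H^\text{set}_A-H^\text{set}_B)(M_A-M_B)>0$.

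The main obstacle is justifying this decomposition. For lengths it is fine, because whether set $j$ is played depends only on earlier set winners, not on the set-$j$ length. For margins, however, the event that set $3$ is played is correlated with the outcomes of sets $1$ and $2$, and the margin of a set is correlated with its winner. So the first-set margin contribution is not simply $H^\text{set}_{S_1}$ times something independent. I would handle this as follows.
\begin{itemize}
\item Sets $1$ and $2$ are always played, so their contributions are exactly $H^\text{set}_{S_1}$ and $\E(H^\text{set}_{S_2})$. The paper's claim gives $\P(S_2=A\mid S_1)$ as $q_A$ or $q_B$.
\item For set $3$, I would condition on its first server $S_3$ and on the event $N=3$. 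Given these, the third set is a fresh set started by $S_3$, so it contributes $\rho_\sigma H^\text{set}_A+(\P(N=3)-\rho_\sigma)H^\text{set}_B$.
\end{itemize}
Here I would use that $\P(N=3\mid S_1)=2p(1-p)$ is independent of $S_1$, by \cite{NewKel}.

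Summing, the contributions from the counts of sets started by $A$ reproduce $(H^\text{set}_A-H^\text{set}_B)(1+q_A+\rho_A-q_B-\rho_B)$. What remains is the correlation between sets $1$--$2$ and the event $N=3$, which does not enter through set $3$'s margin once we condition on $S_3$. This is because the margins of sets $1$ and $2$ are summed unconditionally, not restricted to $N=3$. Hence the decomposition above is exact:
\[
H^\text{match}_A-H^\text{match}_B=(H^\text{set}_A-H^\text{set}_B)(M_A-M_B).
\]
The proof then closes by Proposition~\ref{prop:set-margin-shift} together with $M_A-M_B=1+x+y+2xy>0$, as shown earlier using Claim~\ref{claim:set_transition}. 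The main point to verify carefully is the conditioning step for set $3$: given $(S_3,N=3)$, the third-set margin has mean $H^\text{set}_{S_3}$ by the Markov structure of the match.
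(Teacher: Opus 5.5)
Your proposal is correct and follows the paper's proof. The steps match: the decomposition $\E(H\mid S_1=\sigma)=H^\text{set}_B\,\E(N\mid S_1=\sigma)+(H^\text{set}_A-H^\text{set}_B)\,\E(N_A\mid S_1=\sigma)$, the fact that $\E(N\mid S_1)$ does not depend on $S_1$, and then Proposition~\ref{prop:set-margin-shift} together with $M_A-M_B=1+x+y+2xy>0$. Your justification of the decomposition is exactly what the paper leaves implicit in ``as in Proposition~\ref{prop:match_exp_totals}''. That justification is that whether set $j$ is played, and who starts it, are determined by sets $1,\dots,j-1$, and given that history set $j$ has mean margin $H^\text{set}_{S_j}$. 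It applies verbatim to set lengths as well, so the contrast you draw between totals and margins is not a real one.
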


\begin{proof}
Let $H_1$ denote the difference between number of games won by $A$ and $B$ in the first set and write
\[
H^\text{set}_A=\E(H_1\mid S=A),\qquad H^\text{set}_B=\E(H_1\mid S=B).
\]
As in Proposition~\ref{prop:match_exp_totals}, writing $N$ for the number of sets played and $N_A$ for those started by $A$,
\[
\E(H\mid S_1=\sigma)
=
H^\text{set}_B\,\E(N\mid S_1=\sigma)
+
(H^\text{set}_A-H^\text{set}_B)\,\E(N_A\mid S_1=\sigma).
\]
Since \(\E(N\mid S_1=A)=\E(N\mid S_1=B)\), we obtain
\[
H^\text{match}_A-H^\text{match}_B = (H^\text{set}_A-H^\text{set}_B)(M_A-M_B),
\]
where $M_A=\E(N_A\mid S_1=A)$, and $M_B=\E(N_A\mid S_1=B)$.
By Proposition~\ref{prop:match_exp_totals}, \(M_A-M_B>0\). Hence, since by Proposition~\ref{prop:set-margin-shift}, \(H^\text{set}_A-H^\text{set}_B>0\), it follows $H^\text{match}_A-H^\text{match}_B>0$,
as required.
\end{proof}

\section{Numerical analysis}
\subsection{Quantitative analysis of $T^\text{match}_A-T^\text{match}_B$ and $H^\text{match}_A-H^\text{match}_B$}

Figures~\ref{fig:total_heatmaps} and~\ref{fig:handicap_heatmaps} quantify the dependence of totals and margin on the initial server, providing heatmaps of $T^\text{match}_A-T^\text{match}_B$ and $H^\text{match}_A-H^\text{match}_B$ from Propositions~\ref{prop:match_exp_totals} and \ref{prop:match_exp_handicaps} respectively. 
 The identity of the initial server
induces a shift of up to about $0.4$ games in expectation. In particular, if the stronger player serves first, the expected total is reduced by this amount. For margin, this separation also reaches approximately
$0.4$ games.

Furthermore, since the transition from set-level to match-level quantities introduces only
a negligible adjustment, the serve-order effect is comparatively more significant in first-set
markets than in full-match markets.

\begin{figure}[H]
\centering

\begin{subfigure}{0.48\textwidth}
    \includegraphics[width=\linewidth]{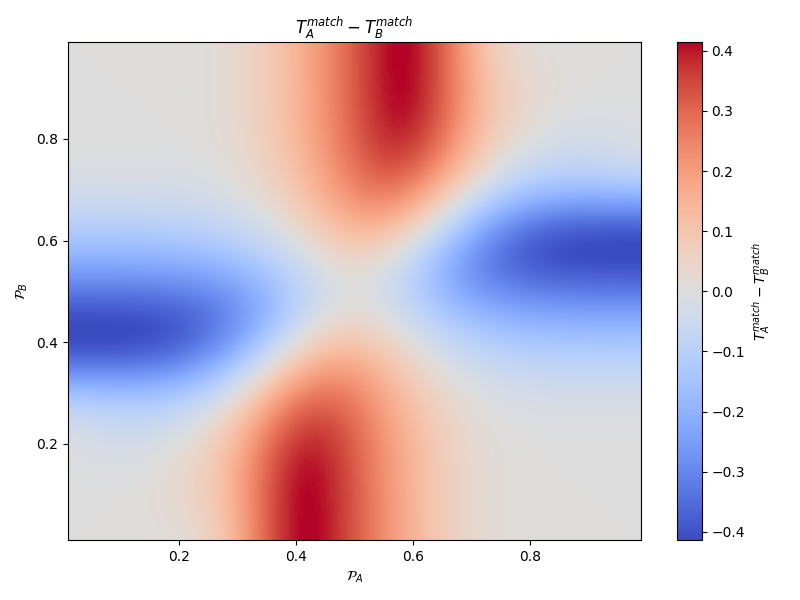}
    \caption{Match-level effect on totals}
\end{subfigure}
\hfill
\begin{subfigure}{0.48\textwidth}
    \includegraphics[width=\linewidth]{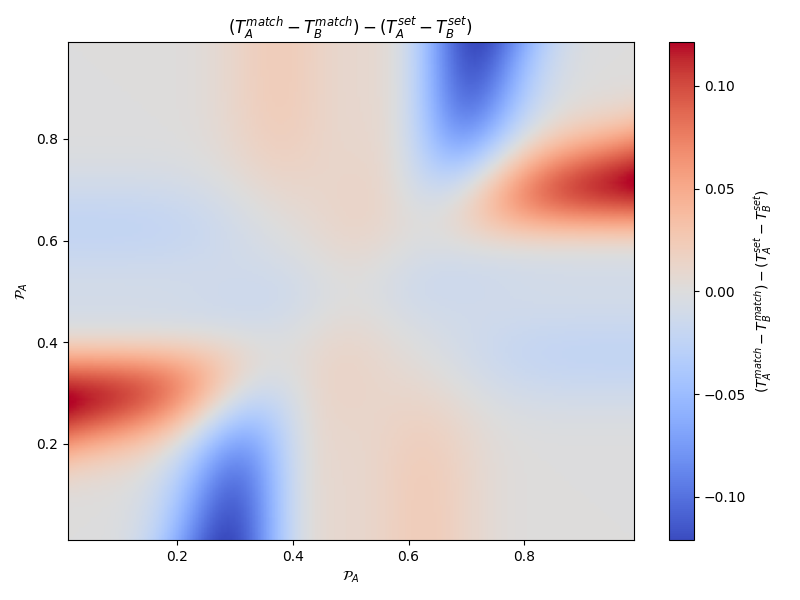}
    \caption{Match and set totals effect difference}
\end{subfigure}

\caption{Expected total games differential caused by serve order.}
\label{fig:total_heatmaps}
\end{figure}
\begin{figure}[H]
\centering

\begin{subfigure}{0.48\textwidth}
    \includegraphics[width=\linewidth]{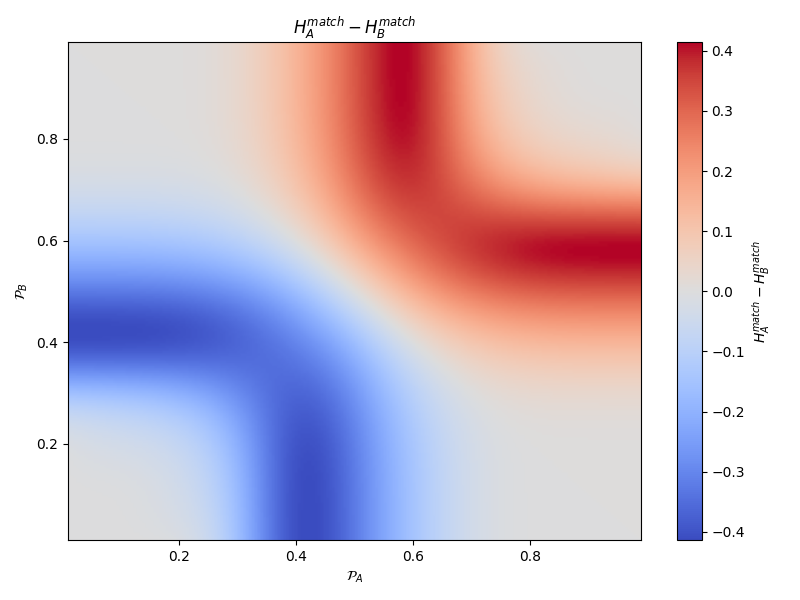}
    \caption{Match-level effect on margin}
\end{subfigure}
\hfill
\begin{subfigure}{0.48\textwidth}
    \includegraphics[width=\linewidth]{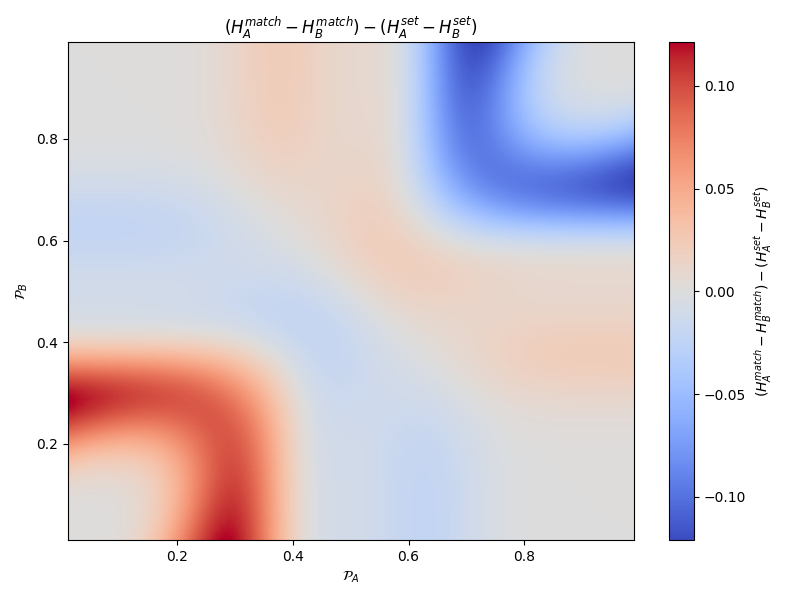}
    \caption{Match and set margin effect difference}
\end{subfigure}

\caption{Expected margin differential  caused by serve order.}
\label{fig:handicap_heatmaps}
\end{figure}

\subsection{Serve order effect on probabilities of exceeding total games lines}

To quantify the effect of service order, we also evaluated $\mathbb{P}(T>n\mid S_1=A)$ and $\mathbb{P}(T>n\mid S_1=B)$
over the grid $0.50 \leq \mathcal{P}_B \leq \mathcal{P}_A \leq 0.70$, for total games lines \(18.5,\ldots,22.5\). For each line, we computed the mean and maximum values of $\mathbb{P}(T>n\mid S_1=A)-\mathbb{P}(T>n\mid S_1=B)$
across the parameter grid, as shown in Figure~\ref{fig:over_prob_difference}. We note that the maximums at $19.5$ and $21.5$ game lines occur at $(\mathcal{P}_A, \mathcal{P}_B) = (0.7, 0.6)$ and $(0.7, 0.55)$, respectively.

\begin{figure}[ht]
\centering
\includegraphics[width=0.6\textwidth]{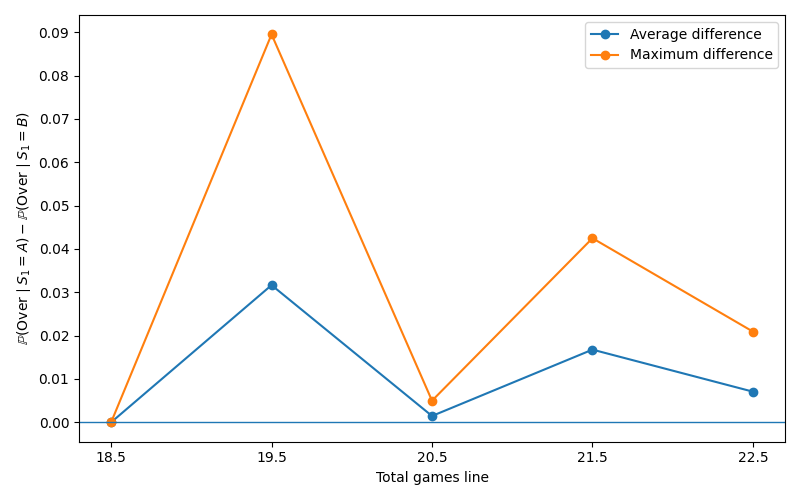}
\caption{Mean and maximum of $\mathbb{P}(T>n\mid S_1=A)-\mathbb{P}(T>n\mid S_1=B)$ for $n\in[18.5, 22.5]$ over the grid $0.50 \leq \mathcal{P}_B \leq \mathcal{P}_A \leq 0.70$.}
\label{fig:over_prob_difference}
\end{figure}

\subsection{Total games model residual analysis}
Figure~\ref{fig:model_heatmaps} presents the residuals of the constant-probability model for total games across approximately $30{,}000$ matches for each of ATP and WTA tours~\cite{SackmannTennisData}. In regions where $\mathcal{P}_A$ and $\mathcal{P}_B$ are sufficiently separated, the residuals are tightly concentrated around zero. In particular, the serve-order effects identified in this paper occur predominantly in regions where the residuals are negligible, indicating that these asymmetries are well captured within the constant-probability framework.

In contrast, the largest residuals occur in a neighbourhood of the diagonal $\mathcal{P}_A \approx \mathcal{P}_B$.
This regime corresponds to balanced matches, where the assumption that
$\mathcal{P}_A$ and $\mathcal{P}_B$ remain constant becomes most fragile: small temporal variations likely accumulate over many games, leading to systematic deviations
from the model. This behaviour is
consistent with the sigmoid shape of the underlying probability functions, where the gradient is
maximal near $\mathcal{P}_A = \mathcal{P}_B$, and hence small perturbations in the inputs induce
disproportionately large changes in outcomes. See, for example,
\cite[Figures~1 and~2]{NewKel} for the sigmoid curves corresponding to games and set winning probabilities.

This phenomenon is reasonably consistent across both ATP and WTA data, indicating that the breakdown is
structural rather than dataset-specific.

\begin{figure}[H]
\centering
\begin{subfigure}{0.48\textwidth}
    \includegraphics[width=\linewidth]{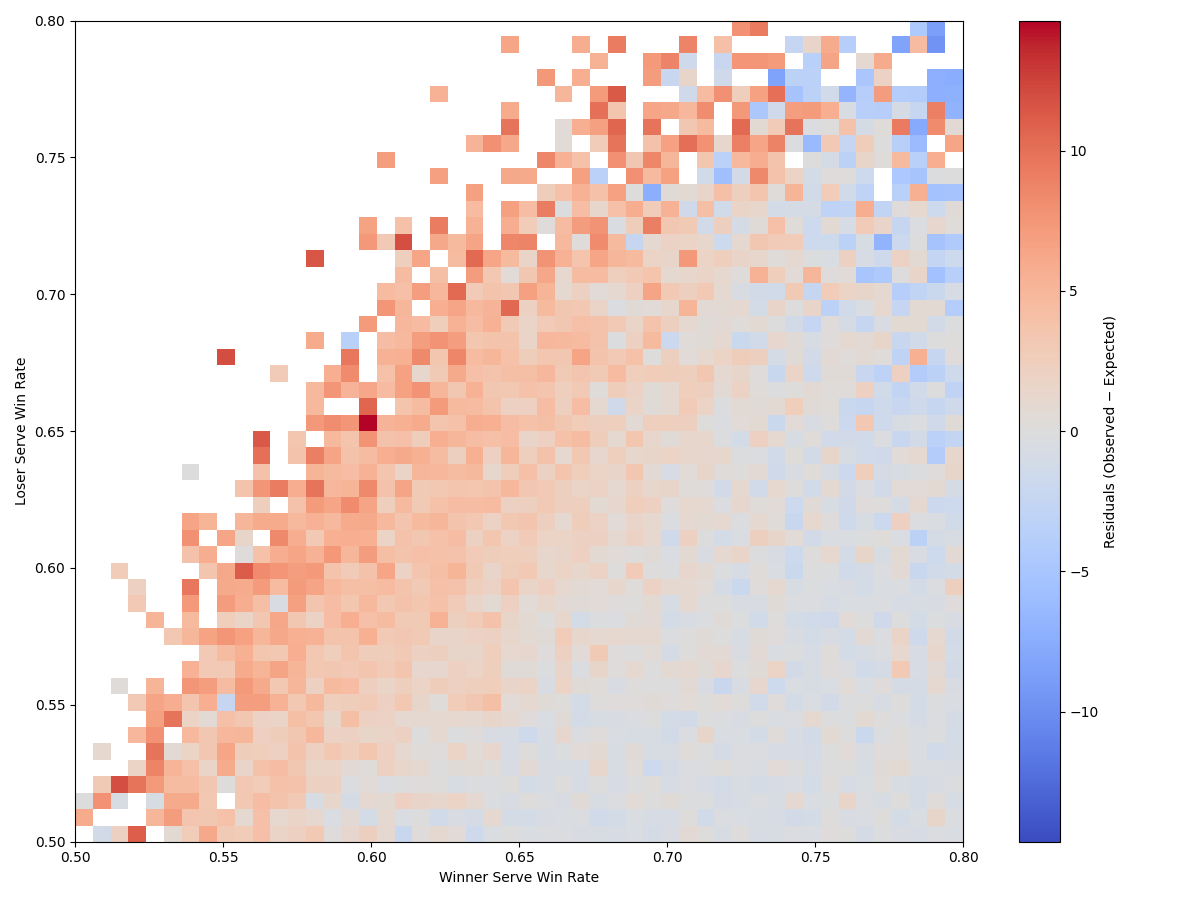}
    \caption{Model residuals for ATP data}
\end{subfigure}
\hfill
\begin{subfigure}{0.48\textwidth}
    \includegraphics[width=\linewidth]{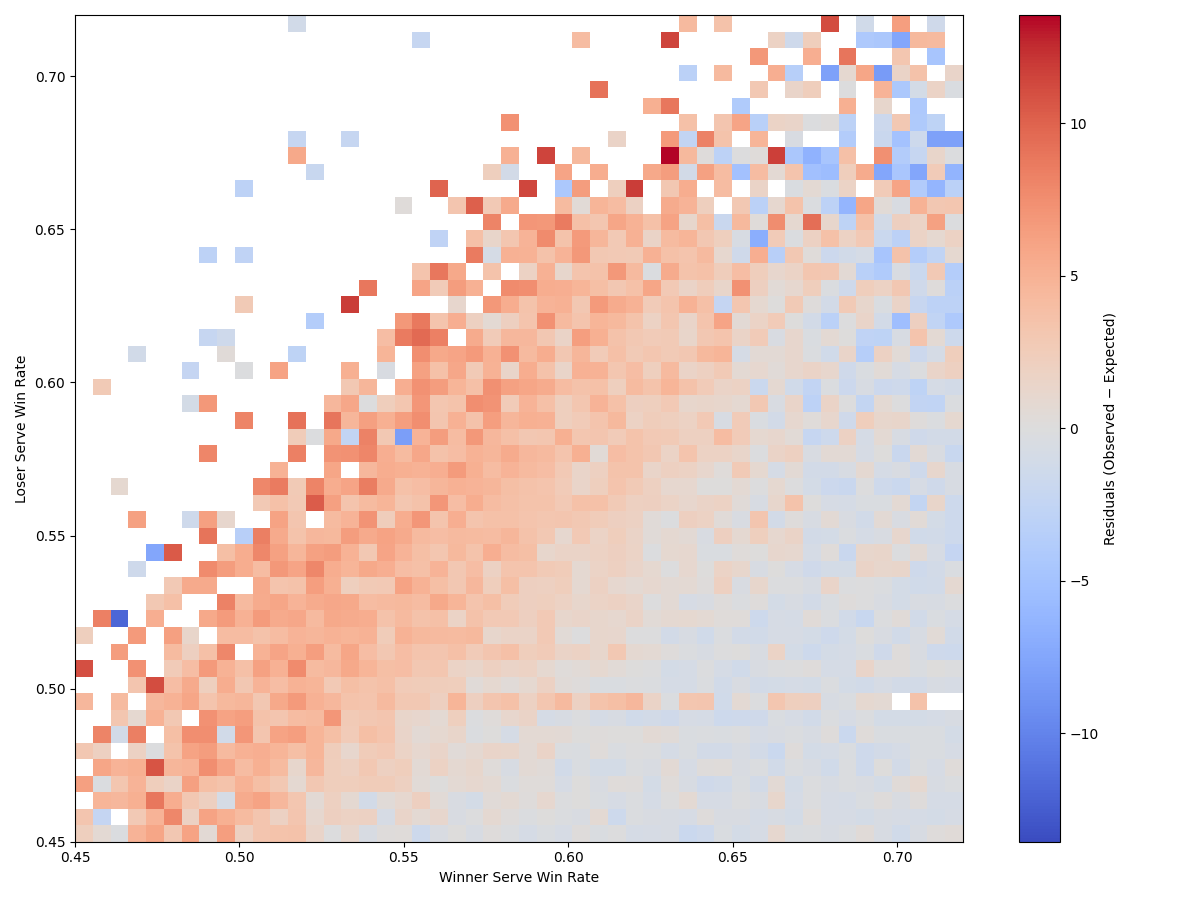}
    \caption{Model residuals for WTA data}
\end{subfigure}

\caption{Total games model residual structures.}
\label{fig:model_heatmaps}
\end{figure}

\subsection{Determining the first server in our dataset}

The dataset used in this paper~\cite{SackmannTennisData} does not record the identity of the first server. We infer the order of serve from the reported scores and the number of breaks of serve. (The dataset records the number of service games for each player, although since there is a large number of missing values for the WTA dataset, we resort to using break statistics.) 

Let \(G_W\) and \(G_L\) denote the total numbers of games won by the match winner and loser respectively, deduced from the score line, and let \(SG_W\) and \(SG_L\) denote their corresponding numbers of service games. Then $SG_W = G_W+ \text{breaks}_W-\text{breaks}_L$. Noting that a tiebreak is counted as an additional game for the purposes of service order alternation~\cite{usta_rules}.

The inference procedure is based on the observation that a set of length $n$ contains $\left\lceil n/2 \right\rceil$
service games for the player who serves first and $\left\lfloor n/2 \right\rfloor$
service games for the other player. 

Serve order is then propagated across sets using the parity of the number of games played in each set. We assume initially that the winner served first in the opening set. Using the parity rule above, we propagate the serve order through the match and compute the implied service-game totals $\widehat{\mathrm{SG}}_W,\widehat{\mathrm{SG}}_L$. If unequal, 
these are compared with the observed totals $\mathrm{SG}_W,\mathrm{SG}_L$ to determine the identity of the first server. Table~\ref{tab:first_server_inference} summarises the results based on matches spanning 2010-2024.

\begin{table}[h]
\centering
\begin{tabular}{lrrrr}
\hline
Tour & Total Matches & Determined & Indeterminate & Determined (\%) \\
\hline
ATP & 32,059 & 14,324 & 17,735 & 44.68 \\
WTA & 38,409 & 16,161 & 22,248 & 42.08 \\
\hline
\end{tabular}
\caption{First server inference results.}
\label{tab:first_server_inference}
\end{table}

\subsection{Residual analysis conditional on initial server}

For each match, the residual $R$ is calculated as the observed total number of games minus the expected number of games from the constant serve win probability model, which is computed using the inferred initial server. Table~\ref{tab:serve_order_residuals} summarises the residual distributions according to the identity of the first server.

\begin{table}[ht]
\centering
\caption{Statistics for total games residuals according to initial server.}
\label{tab:serve_order_residuals}

\begin{tabular}{llrrrrrrrr}
\hline
Tour & First Server & $n$ & Mean & Std & SE & Median & Q25 & Q75 \\
\hline
ATP & L & 5135 & 1.847 & 4.221 & 0.059 & 0.754 & -1.429 & 5.582 \\
ATP & W & 9189 & 0.201 & 4.080 & 0.043 & -0.713 & -2.603 & 2.907 \\
WTA & L & 6677 & 1.332 & 3.744 & 0.046 & 0.070 & -1.353 & 4.179 \\
WTA & W & 9484 & 0.582 & 3.677 & 0.038 & -0.441 & -1.854 & 2.831 \\
\hline
\end{tabular}
\end{table}
The results reveal a clear dependence on serve order. Matches in which the loser serves first tend to last relatively longer than predicted by the model, compared to the matches in which the winner serves first. 

 We also compute the average residual, ignoring the initial server's identity. The results suggest a modest systematic tendency for the model to underestimate match duration. As Figure~\ref{fig:model_heatmaps} suggests, this is largely due to inclusion of matches, where $\mathcal{P}_W \approx \mathcal{P}_L$.
\begin{table}[ht]
\centering
\caption{Statistics for total games residuals.}
\label{tab:mean_serve_order_residuals}
\begin{tabular}{lrr}
\hline
Tour & Mean Residual & SE \\
\hline
ATP & 0.792 & 0.035 \\
WTA & 0.892 & 0.029 \\
\hline
\end{tabular}
\end{table}

\subsection{Association between serve superiority and the likelihood to open a match}

To investigate whether relative serve superiority is associated with the identity of the first server, a logistic regression model was fitted with response variable
\[
Y=
\begin{cases}
1,& \text{if the winner served first},\\
0,& \text{if the loser served first},
\end{cases}
\]
and covariate $\Delta_S = \mathcal{P}_W-\mathcal{P}_L$,
where $\mathcal{P}_W$ and $\mathcal{P}_L$ denote the serve win probabilities of the winner and loser respectively. Thus, positive coefficients indicate that larger values of $\Delta_S$ are associated with an increased probability that the eventual winner served first. Table~\ref{tab:logistic_serve_order} summarises the fitted models.

\begin{table}[ht]
\centering
\caption{Logistic regression results relating $\Delta_S$ to the probability that the winner served first.}
\label{tab:logistic_serve_order}
\begin{tabular}{lrrrrrr}
\hline
Tour & $\beta_1$ & SE & Odds Ratio & 95\% CI Lower & 95\% CI Upper & $p$-value \\
\hline
ATP & 2.027 & 0.196 & 7.591 & 5.168 & 11.152 & $5.16\times10^{-25}$ \\
WTA & 0.587 & 0.170 & 1.798 & 1.289 & 2.507 & $5.42\times10^{-4}$ \\
\hline
\end{tabular}
\end{table}

Here 
\[
\beta_1
=
\frac{\partial}{\partial \Delta_S}
\log\!\left(
\frac{\Pr(Y=1)}
     {\Pr(Y=0)}
\right).
\]For both ATP and WTA matches, the coefficient of $\Delta_S$ is positive and statistically significant. This indicates that higher values of $\Delta_S$ are associated with an increased probability that the eventual winner served first. This is consistent with the hypothesis that stronger players are more inclined to serve first in order to exert early score-line pressure, whereas weaker players may prefer to receive first to avoid the psychological disadvantage associated with falling behind after an early break.

The effect is substantially stronger in the ATP data. The estimated coefficient of $2.027$ corresponds to an odds ratio of $7.59$, implying a strong positive association between serve superiority and the probability that the winner served first. In contrast, the corresponding WTA odds ratio is $1.80$, indicating a weaker but still statistically significant relationship.

\begin{remark}
Write $M(x,y)$ for the function \cite[equation 48]{NewKel}, such that the probability of $A$ winning a match against $B$ is $\M_{AB} = M(\mathcal{P}_{A}, \mathcal{P}_{B})$. A useful observation in \cite{KnoSpaMad} is that $M(\mathcal{P}_{A}, \mathcal{P}_{B}) \approx M(\mathcal{P}_{A} + \varepsilon, \mathcal{P}_{B}+ \varepsilon)$ for sufficiently small values of $\varepsilon >0$ (see \cite[Figure~1]{KnoSpaMad}). This justifies the authors' use of the approximation
\[
\M_{AB} \approx \frac{M(0.6 + \Delta_{AB}, 0.6) + M(0.6, 0.6-\Delta_{AB})}{2},
\]
as long as $\Delta_{AB} = \mathcal{P}_{A}- \mathcal{P}_{B}$ is sufficiently small. (The value $0.6$ appears in the formula as it is roughly the average value for $\mathcal{P}_{A}$ across the professional tours.)

This relation is one-to-one, implying that given a value of $\M_{AB}$, it may, up to some small error, be mapped to expected total number of games or margin for that match. In particular, this suggests that $\M_{AB}$ may serve as a strong covariate for these quantities. This observation suggests the possibility of modelling odds for total games and various other markets using odds for the win market. Consequently, after the coin toss is observed, one should in principle be able to update
the odds for totals and handicap markets by conditioning on the realised initial server. Noting that bookmaker odds for various relevant markets are never updated after the coin toss, this suggests a small but systematic inefficiency.
\end{remark}

\section*{Appendix}
Let \(\mathcal S^{A}_{nm}\) denote the probability that a set in which
\(A\) serves first ends \(n\)-\(m\) from \(A\)'s perspective. Similarly,
let \(\mathcal S^{B}_{nm}\) denote the corresponding probability when
\(B\) serves first. Following \cite[equations A.1-A.6]{NewKel}, define
\begin{align*} S_{60}(x_1,y_2) &= (x_1 y_2)^3\\ S_{61}(x_1,x_2,y_1,y_2) &= 3x_1^3 x_2 y_2^3 + 3x_1^4 y_1 y_2^2 \\ S_{62}(x_1,x_2,y_1,y_2) &= 12 x_1^3 x_2 y_1 y_2^3 + 6x_1^2 x_2^2 y_2^4 + 3x_1^4 y_1^2 y_2^2 \\ S_{63}(x_1,x_2,y_1,y_2) &= 24 x_1^3 x_2^2 y_1 y_2^3 + 24 x_1^4 x_2 y_1^2 y_2^2 + 4 x_1^2 x_2^3 y_2^4 + 4 x_1^5 y_1^3 y_2 \\ S_{64}(x_1,x_2,y_1,y_2) &= 60 x_1^3 x_2^2 y_1^2 y_2^3 + 40 x_1^2 x_2^3 y_1 y_2^4 + 20x_1^4 x_2 y_1^3 y_2^2 + 5 x_1 x_2^4 y_2^5 + x_1^5 y_1^4 y_2 \\ S_{75}(x_1,x_2,y_1,y_2) &= 100 x_1^3 x_2^3 y_1^2 y_2^4 + 100 x_1^4 x_2^2 y_1^3 y_2^3 + 25 x_1^2 x_2^4 y_1 y_2^5 + 25 x_1^5 x_2 y_1^4 y_2^2 + x_1 x_2^5 y_2^6 + x_1^6 y_1^5 y_2 \end{align*} 
Then $\S^A_{60} = S_{60}(\G_{A}, 1-\G_{B})$, $\S^A_{06} = S_{06}(1-\G_{A},\G_{B})$ and
\[
\mathcal S^A_{nm}
=
S_{nm}(\G_A,1-\G_A,\G_B,1-\G_B),
\qquad
\mathcal S^A_{mn}
=
S_{mn}(1-\G_A,\G_A,1-\G_B,\G_B),
\]
for $nm\in \{61,62,63,64,75\}$ and
\[
\mathcal S^A_{66}
=
1-
\left(
\sum_{m=0}^4 \mathcal S^A_{6m}
+
\sum_{m=0}^4 \mathcal S^A_{m6}
\right)
-\mathcal S^A_{75}
-\mathcal S^A_{57}.
\]
Hence, writing $\mathcal T_{A}^A$ for the probability that $A$ wins a tiebreak, as the initial server (see \cite[section~3.2]{NewKel}), 
\[
\mathcal S^A_{76}=\mathcal S^A_{66}\mathcal T_{A}^A,
\qquad
\mathcal S^A_{67}=\mathcal S^A_{66}(1-\mathcal T_{A}^A).
\]

\end{document}